\newtheorem{theorem}{Theorem}
\newtheorem{corollary}{Corollary}
\newtheorem{lemma}{Lemma}
\newtheorem{remark}{Remark}
\newtheorem{definition}{Definition}
\newtheorem{assumption}{Assumption}
\title{\LARGE \bf
Distributed Maximization of Submodular and Approximately Submodular Functions
}
\author{Lintao Ye and Shreyas Sundaram% <-this % stops a space
\thanks{This research was supported by NSF grant CMMI-1635014. Lintao Ye and Shreyas Sundaram are with the School of Electrical
and Computer Engineering at Purdue University. Email: \{ye159,sundara2\}@purdue.edu. 
}
}
\begin{document}

\maketitle
\thispagestyle{plain}
\pagestyle{plain}
%\pagenumbering{arabic}

%%%%%%%%%%%%%%%%%%%%%%%%%%%%%%%%%%%%%%%%%%%%%%%%%%%%%%%%%%%%%%%%%%%%%%%%%%%%%%%%
\begin{abstract}
We study the problem of maximizing a submodular function, subject to a cardinality constraint, with a set of agents communicating over a connected graph. We propose a distributed greedy algorithm that allows all the agents to converge to a near-optimal solution to the global maximization problem using only local information and communication with neighbors in the graph. The near-optimal solution approaches the $(1-1/e)$ approximation of the optimal solution to the global maximization problem with an additive factor that depends on the number of communication steps in the algorithm. We then analyze convergence guarantees of the proposed algorithm. This analysis reveals a tradeoff between the number of communication steps and the performance of the algorithm. Finally, we extend our analysis to nonsubmodular settings, using the notion of approximate submodularity.
\end{abstract}

\section{Introduction}
In recent years, the analysis of large-scale networks has received much attention from researchers, where the networks consist of a group of agents with different local objective functions. For such networks, the goal is to design a resource allocation method that operates in a decentralized way with local communication and fast convergence to an (approximately) optimal operating point. Scenarios where the local objective functions depend on the entire resource allocation vector are of particular interest. For such scenarios, there is a vast literature on designing  distributed algorithms that guarantee convergence of the solution obtained by each agent to an optimizer of the average of all the local objective functions (e.g., \cite{tsitsiklis1986distributed,xiao2006optimal,nedic2009distributed,nedic2010constrained,yuan2016convergence} and the references therein). 

Much of the existing work has been devoted to optimization problems in the continuous domain, where  the local objective functions are convex. In contrast, settings with (discrete) submodular objective functions have been less explored (e.g.,  \cite{mokhtari2018decentralized,gharesifard2017distributed,mirzasoleiman2013distributed}). Nonetheless, the problem of maximizing submodular functions (subject to constraints) arises in many different applications including, for instance, budget allocation \cite{soma2014optimal}, sensor placement \cite{krause2008near}, sensor scheduling \cite{jawaid2015submodularity} and influence maximization in social networks \cite{kempe2003maximizing}. Thus, in this paper we focus on scenarios in distributed optimization where the local objective functions are submodular. 

\subsection*{Related Work}
In \cite{mokhtari2018decentralized}, the authors considered maximizing a discrete submodular function subject to a general matroid constraint and proposed a decentralized algorithm to solve this problem. The algorithm relies on first lifting the local discrete submodular functions to continuous domains and then applying appropriate rounding schemes to the obtained solution. The authors in \cite{gharesifard2017distributed} considered the scenario where a group of agents sequentially maximize a submodular function. The problem reduces to the canonical problem of maximizing a submodular function subject to a partitioned matroid constraint, which can be solved by greedy algorithms with a $1/2$ multiplicative approximation ratio \cite{fisher1978analysis}.  In contrast, we consider the setting where a group of agents maximize a (global) submodular function (subject to a cardinality constraint) in parallel using a decentralized (i.e., distributed) greedy algorithm, which does not require any lifting or rounding process.

Regarding the maximization of nonsubmodular functions, the notion of approximate submodularity has been used to provide performance guarantees for (centralized) greedy algorithms applied to such problems (e.g., \cite{das2018approximate ,bian2017guarantees,ye2019sensor}). Here, we aim to propose a distributed greedy algorithm that can also solve such problems in a distributed manner.

\subsection*{Contributions}
We propose a distributed greedy algorithm with a group of agents communicating over a connected network, which allows each agent to converge to within an additive factor of the $(1-1/e)$ approximation of the optimal solution to the global optimization problem.  This additive factor is a function of the number of the agents, the cardinality constraint, properties of the local functions and design parameters in the distributed greedy algorithm. In particular, the analysis reveals a tradeoff between the performance of the algorithm and the number of communication steps in the algorithm. Finally, we extend our analysis to cases when the objective function is approximately submodular.

%%%%%%%%%%%%%%%%%%%%%%%%%%%%%%%%%%%%%%%%%%%%%%%%%%%%%%%%%%%%%%%%%%%%%%%%%%%%%%%%

\subsection*{Notation and terminology}
The sets of integers and real numbers are denoted as $\mathbb{Z}$ and $\mathbb{R}$, respectively. For $x\in\mathbb{R}$, let $|x|$ denote its absolute value. For a set $\mathcal{S}$, let $|\mathcal{S}|$ denote its cardinality. Let $\mathbf{1}_n$ denote a column vector of dimension $n$ with all of its elements equal to 1.  For a matrix $P\in\mathbb{R}^{m\times n}$  and a vector $y\in\mathbb{R}^n$, let $P^{\prime}$ and $y^{\prime}$ be their transposes, respectively. Let $P_{ij}$ denote the element in the  $i$th row and $j$th column of $P$. Let $P_i$ denote the $i$th row of $P$. The eigenvalues of $P$ are ordered with nonincreasing magnitude (i.e., $|\lambda_1(P)|\ge\cdots\ge|\lambda_n(P)|$). Given two functions $\varphi_1:\mathbb{R}_{\ge0}\to\mathbb{R}$ and $\varphi_2:\mathbb{R}_{\ge0}\to\mathbb{R}$, $\varphi_1(n)$ is $O(\varphi_2(n))$ if there exist positive constants $c$ and $N$ such that $|\varphi_1(n)|\le c|\varphi_2(n)|$ for all $n\ge N$.

\section{Problem Formulation} \label{sec:problem formulation}
We first introduce the following definitions (e.g., \cite{nemhauser1978analysis}).

\begin{definition}
\label{def:increasing}
Given a ground set $V$, a set function $f:2^V\to\mathbb{R}_{\ge0}$ is said to be monotone nondecreasing if for all $A\subseteq B\subseteq V$, $f(A)\le f(B)$.
\end{definition}

\begin{definition}
\label{def:submodular}
Given a set $V$, a set function $f:2^V\to\mathbb{R}_{\ge0}$ is said to be submodular if for all $A\subseteq B\subseteq V$ and for all $v\in V\setminus B$, $f(\{v\}\cup A)-f(A)\ge f(\{v\}\cup B)-f(B)$.
\end{definition}

Consider a set of $n$ agents that communicate over a graph to maximize a global objective function. Each agent can be viewed as a node $i\in\mathcal{N}\triangleq\{1,\dots,n\}$. Denote the communication graph as $\mathcal{G}=(\mathcal{N}, \mathcal{E})$, which is assumed to be undirected and connected throughout this paper.  An edge $(i,j)\in\mathcal{E}$, which is an unordered pair, indicates a {\it bidirectional} communication between agents $i$ and $j$, i.e., agent $i$ can receive information from agent $j$ at each time step, and vice versa. Let $\mathcal{N}_i$ denote the set of neighbors of agent $i$, i.e., $\mathcal{N}_i\triangleq\{j\in\mathcal{N}:(i,j)\in\mathcal{E}\}$. 

\begin{definition}
\label{def:diameter of graph}
The diameter of a connected graph $\mathcal{G}=(\mathcal{N},\mathcal{E})$ is given by $d(\mathcal{G})=\mathop{\max}_{i,j\in\mathcal{N}}l_{ij}$, where $l_{ij}$ is the length of the shortest path (i.e., distance) from $i$ to $j$ in $\mathcal{G}$.
\end{definition}

We now consider the scenario where each agent $i\in\mathcal{N}$ has access to a local set function $f_i:2^V\to\mathbb{R}_{\ge0}$ with a cardinality constraint $K\in\mathbb{Z}_{\ge1}$. The function $f_i(\cdot)$ is assumed to be monotone nondecreasing and submodular for all $i\in\mathcal{N}$.\footnote{We will generalize our analysis to monotone nondecreasing nonsubmodular functions later.} Meanwhile, we assume without loss of generality  that $f_i(\cdot)$ is normalized such that $f_i(\emptyset)=0$ for all $i\in\mathcal{N}$. The objective for the agents is to solve, in a distributed manner (i.e., by repeatedly exchanging information only with their neighbors), the following global optimization problem:
\begin{equation}
\label{eqn:global objective function}
\mathop{\max}_{S\subseteq V, |S|\le K} f(S)=\mathop{\max}_{S\subseteq V, |S|\le K} \frac{1}{n}\sum_{i=1}^n f_i(S),
\end{equation}
where $f(S)\triangleq\frac{1}{n}\sum_{i=1}^n f_i(S)$, and $K\in\mathbb{Z}_{\ge1}$. 
\iffalse
\begin{assumption}
\label{assumption:bound on the local objective function}
For all $i\in\mathcal{N}$, there exists $F_i\in\mathbb{R}_{\ge0}$ such that the local objective function $f_i(\cdot)$ satisfies $f_i(v)\le F_i$ for all $v\in V$. 
\end{assumption}
\fi

Since the sum of monotone nondecreasing submodular functions is monotone nondecreasing submodular, $f(\cdot)$ is monotone nondecreasing submodular (with $f(\emptyset)=0$). Thus, the global optimization problem (Problem \eqref{eqn:global objective function}) is to maximize a monotone nondecreasing submodular function subject to a cardinality constraint. A (centralized) greedy algorithm has been proposed to solve Problem \eqref{eqn:global objective function} with a multiplicative approximation ratio of $(1-1/e)$ \cite{nemhauser1978analysis}. Moreover, it was shown in \cite{feige1998threshold} that the greedy algorithm achieves the best possible approximation ratio of any polynomial-time approximation algorithm for Problem  \eqref{eqn:global objective function} in the centralized case  if P$\neq$NP. This motivates us to extend the centralized greedy algorithm to solve Problem  \eqref{eqn:global objective function}  in the distributed case. 

\iffalse
\begin{algorithm}
\textbf{Input:} $f:2^V\to\mathbb{R}_{\ge0}$, $K\in\mathbb{Z}_{\ge1}$\\
\textbf{Output:} $S_K$
\caption{Centralized Greedy Algorithm}\label{algorithm:centralized greedy}
\begin{algorithmic}[1]
\State $k=0$, $S_0=\emptyset$
\While{$k\le K-1$}
    \State $s^*\in\mathop{\arg\max}_{s\in V\setminus S_k}(f(\{s\}\cup S_k)-f(S_k))$
    \State $S_{k+1}= S_k\cup\{s^*\}$, 
    \State $k= k+1$
\EndWhile
\end{algorithmic}
\end{algorithm}
\fi

\section{Distributed Greedy Algorithm}
The main idea of the distributed greedy algorithm (Algorithm~$\ref{algorithm:decentralized greedy}$) is as follows. Based on the centralized greedy algorithm, the distributed greedy algorithm runs for $K$ rounds in total. Given a current set $S_k$ ($|S_k|=k$) of selected elements before the $(k+1)$th ($k\le K-1$) round  of the algorithm, the $(k+1)$th round of the algorithm lets all the agents reach consensus at an element $s^{\prime}\in V\setminus S_{k}$ (after a certain number of communication steps) that maximizes $(f(\{s^{\prime}\}\cup S_k)-f(S_k))$ with some additive error (suboptimality) and add $s^{\prime}$ to $S_{k}$ to obtain $S_{k+1}$.  After $K$ rounds, all the agents obtain a set $S_{K}$ ($|S_{K}|=K$) that gives a solution to Problem \eqref{eqn:global objective function} with a suboptimality bound which we will discuss later. 

\begin{algorithm}
\textbf{Input:} $f_i:2^V\to\mathbb{R}_{\ge0}$, $K\in\mathbb{Z}_{\ge1}$, $\mathcal{G}=(\mathcal{N},\mathcal{E})$\\
\textbf{Design parameters:} $W\in\mathbb{R}^{n\times n}$, $T\in\mathbb{Z}_{\ge1}$, $T'=T+1+d(\mathcal{G})$, $\psi\in\mathbb{R}_{\ge0}$\\
\textbf{Output:} $\bar{S}^K$
\caption{Distributed Greedy Algorithm}\label{algorithm:decentralized greedy}
\begin{algorithmic}[1]
\State $k=0$, $\bar{S}_i^0 = \emptyset$
\While{$k\le K-1$}
    \State Calculate $x_{i,v}^0=f_i(\{v\}\cup \bar{S}_i^k)-f_i(\bar{S}_i^k),\ \forall v\in V\setminus\bar{S}_i^k$
    \State Stack $x_{i,v}^0$ into a vector $x_i^0\in\mathbb{R}^{|V\setminus\bar{S}_i^k|}$
    \For{$t =0,\dots, T-1$}
    \State Update $x_i^{t+1}=w_{ii}x_i^t+\sum_{j\in\mathcal{N}_i}w_{ij}x_j^t$
    \EndFor
    \State Find $x_{i,v_{i^*}}^{T}=\mathop{\max}_{v\in V}x_{i,v}^{T}$ 
    \State Set $S_i^{T+1}=\{v: x_{i,v}^{T}\ge x_{i,v_{i^*}}^{T}-\psi\}$
    \For{$t=T+1,\dots,T'-1$}
    \State Update $S_i^{t+1}=\bigcap_{j\in\mathcal{N}_i}S_j^t$
    \EndFor
    \State Find $j_{\text{min}}=\mathop{\min}\{j:v_j\in S_i^{T'}\}$
    \State Update $\bar{S}_i^{k+1}=\{v_{j_{\text{min}}}\}\cup\bar{S}_i^k$
    \State $k=k+1$
\EndWhile
\State $\bar{S}^K=\bar{S}_i^K$
\end{algorithmic}
\end{algorithm}

Specifically, denote $V\triangleq\{v_1,\dots,v_{|V|}\}$. Each agent $i\in\mathcal{N}$ initializes a local variable $\bar{S}_i^k=\emptyset$ with $k=0$. In the first round of the distributed greedy algorithm (with $k=0$), each agent $i\in\mathcal{N}$ starts with $t=0$, and calculates $x_{i,v}^0= f_i(\{v\})-f_i(\emptyset)$ for all $v\in V$, where $f_i(\emptyset)=0$. In other words, each agent $i\in\mathcal{N}$ maintains a local variable $x_i^t=\begin{bmatrix}x^t_{i,v_1} & \cdots & x^t_{i,v_{|V|}}\end{bmatrix}^{\prime}\in\mathbb{R}^{|V|}$ at time step $t=0$. Given design parameter $T\in\mathbb{Z}_{\ge1}$ of the algorithm, agent $i$ first updates $x_i^t$ (using $x_i^t$ and $x_j^t$ for all $j\in\mathcal{N}_i$) from time step $t=0$ to time step $t=T$. Then, agent $i$ obtains a set $S_i^{T+1}$ and maintains a local variable $S_i^t$. Given design parameter $T'\in\mathbb{Z}_{\ge1}$ of the algorithm, where $T'>T+1$,\footnote{We will explain the choice of $T'$ in the algorithm later.} agent $i$ now updates $S_i^t$ (using $S_j^t$ for all $j\in\mathcal{N}_i$) from time step $t=T+1$ to time step $t=T'$. At the end of the first round of the distributed greedy algorithm (i.e., at $t=T'$), all the agents in $\mathcal{N}$ choose the same element $s_0$ from $S_i^{T'}$ and update $\bar{S}_i^0$ as $\bar{S}_i^1=\bar{S}_i^0\cup\{s_0\}$. After finishing the first round, the distributed greedy algorithm enters the second round (with $k=1$). Again, each agent $i\in\mathcal{N}$ starts with $t=0$ and calculates $x_i^0=\begin{bmatrix}x^0_{i,v_{j_1}} & \cdots & x^0_{i,v_{j_{|V_1|}}}\end{bmatrix}^{\prime}\in\mathbb{R}^{|V_1|}$, where $V_1\triangleq V\setminus\bar{S}_i^1=\{v_{j_1},\dots,v_{j_{|V_1|}}\}$ and $x_{i,v}^0= f_i(\{v\}\cup\bar{S}_i^1)-f_i(\bar{S}_i^1)$ for all $v\in V_1$.  Similarly to the first round of the distributed greedy algorithm, agent $i$ obtains the updated local variables $x_i^{T}$ and $S_i^{T'}$, which leads to the update of $\bar{S}_i^1$ as $\bar{S}_i^2=\bar{S}_i^1\cup \{s_1\}$, where $s_1$ is an element chosen from $S_i^{T'}$ by all the agents in $\mathcal{N}$. The distributed greedy algorithm repeats the above process for $K$ rounds. Note from the above arguments that for all $k\in\{0,\dots,K-1\}$, $\bar{S}_i^k=\bar{S}_j^k$ for all $i,j\in\mathcal{N}$.
\begin{remark}
As we will see in the following, our distributed greedy algorithm has a consensus phase (with a limited number of communications) among the set of agents in each round of the algorithm. Similar consensus-based distributed algorithms have been used in, for example, distributed task allocation \cite{choi2009consensus} and distributed Kalman filtering \cite{carli2008distributed}. 
\end{remark}

To implement Algorithm~$\ref{algorithm:decentralized greedy}$, we assume the following.
\begin{assumption}
\label{assumption:central clock}
The set $\mathcal{N}$ of agents has a synchronized clock such that all the agents in $\mathcal{N}$ know the current values of $k\in\mathbb{Z}_{\ge0}$ and $t\in\mathbb{Z}_{\ge1}$. Moreover, the agents in $\mathcal{N}$ know the design parameters $T$ and $T'$  before running the algorithm, where $T,T',K\in\mathbb{Z}_{\ge1}$ and $T'>T+1$.
\end{assumption}

We now describe (distributed) update rules for the agents in $\mathcal{N}$ to update the local variables $x_i^t$, $S_i^t$ and $\bar{S}_i^k$ from Algorithm~$\ref{algorithm:decentralized greedy}$, respectively. Consider the $(k+1)$th round of the distributed algorithm, where $k\le K-1$ ($k\in\mathbb{Z}_{\ge0}$). Noting that $\bar{S}_i^k=\bar{S}_j^k$ for all $i,j\in\mathcal{N}$, we denote $\bar{S}^k=\bar{S}_i^k$. 

First, we describe the update rule for $x_i^t\in\mathbb{R}^{|V_k|}$, where $V_k\triangleq V\setminus\bar{S}_i^k$. Note that each agent $i\in\mathcal{N}$ initializes $x_i^0$ in lines $3$-$4$ of Algorithm~$\ref{algorithm:decentralized greedy}$ as
\begin{equation}
\label{eqn:initial x_t}
x_i^0=\begin{bmatrix}x^0_{i,v_{l_1}} & \cdots & x^0_{i,v_{l_{|V_k|}}}\end{bmatrix}^{\prime}\in\mathbb{R}^{|V_k|},
\end{equation}
where $x_{i,v}^0= f_i(\{v\}\cup \bar{S}_i^k)-f_i(\bar{S}_i^k)$ for all $v\in V_k$ and $V_k=\{v_{l_1},\dots,v_{l_{|V_k|}}\}$. Since $f_i(\cdot)$ is monotone nondecreasing for all $i\in\mathcal{N}$, we have $x_{i,v}^0\ge 0$ for all $i\in\mathcal{N}$ and for all $v\in V_k$. Starting from time step $t=0$ with $x_i^0$, each agent $i\in\mathcal{N}$ computes $x_i^{t+1}$ in line 6 of Algorithm~$\ref{algorithm:decentralized greedy}$ according to the following update rule:
\begin{equation}
\label{eqn:decentralized updating rule}
x_i^{t+1}=w_{ii}x_i^t+\sum_{j\in\mathcal{N}_i}w_{ij}x_j^{t},
\end{equation}
where agent $i$ assigns a weight $w_{ij}\in\mathbb{R}$ to agent $j$ for all $j\in\mathcal{N}_i\cup\{i\}$ and updates $x_i^{t+1}$ as a weighted average of $x_j^t$ from $j\in\mathcal{N}_i\cup\{i\}$. Denote $W\in\mathbb{R}^{n\times n}$ as the weight matrix (or {\it mixing matrix}) such that $W_{ij}=w_{ij}$ for all $i,j\in\mathcal{N}$. We assume that the weight matrix $W$ satisfies the following assumptions, which are standard in the distributed  optimization literature (e.g., \cite{yuan2016convergence}).
\begin{assumption}
\label{assumption:assumption of W}
The weight matrix $W\in\mathbb{R}^{n\times n}$ (associated with $\mathcal{G}=(\mathcal{N},\mathcal{E})$) is assumed to satisfy: (1) $W_{ij}\in\mathbb{R}_{\ge0}$ for all $i,j\in\mathcal{N}$ and $W_{ij}=0$ if $(i,j)\notin\mathcal{E}$; (2) $W\mathbf{1}_n=\mathbf{1}_n$; (3) $W=W'$  and (4) $\mu(W)\triangleq\mathop{\max}\{\lambda_2({W}),-\lambda_n(W)\}<1$. 
\end{assumption}
\begin{remark}

Assumption~$\ref{assumption:assumption of W}.(1)$-$(3)$ ensure that $W$ is symmetric and doubly stochastic. Thus, the eigenvalues of $W$ are real and satisfy $1=\lambda_1(W)\ge\lambda_2(W)\ge\cdots\ge\lambda_n(W)\ge-1$ (e.g., \cite{boyd2004fastest}). Assumption~$\ref{assumption:assumption of W}.(4)$ is satisfied if the Markov chain corresponding to matrix $W$ is irreducible and aperiodic (e.g., \cite{boyd2004fastest}, \cite{diaconis1991geometric}). Note that the weight matrix $W$ is also a design parameter of the distributed greedy algorithm. Similarly to Assumption~$\ref{assumption:central clock}$, we assume that each agent $i\in\mathcal{N}$ knows $W_i$ (i.e., $w_{ij}$ for all $j\in\mathcal{N}\cup\{i\}$).
\end{remark}

By repeatedly running update rule \eqref{eqn:decentralized updating rule},  $x_i^t$ will converge to $\frac{1}{n}\sum_{j\in\mathcal{N}}x_j^0$ as $t\to\infty$ (e.g., \cite{boyd2004fastest}) for all $i\in\mathcal{N}$, so that $x_{i,v}^t$ will converge to $\frac{1}{n}\sum_{j\in\mathcal{N}}x_{j,v}^0$ as $t\to\infty$ for all $v\in V_k$. To analyze finite-time performance of update rule \eqref{eqn:decentralized updating rule},  let us first consider the following function of $t\in\mathbb{Z}_{\ge1}$: 
\begin{equation}
\label{eqn:definition of consensus error}
\delta_k(t)=\mathop{\max}_{i\in\mathcal{N},v\in V_k}\Big|x_{i,v}^t-\frac{1}{n}\sum_{j\in\mathcal{N}}x^0_{j,v}\Big|,
\end{equation}
which is the maximum deviation of $x_{i,v}^t$ from the limiting value $\frac{1}{n}\sum_{j\in\mathcal{N}}x^0_{j,v}$ over all agents $i\in\mathcal{N}$ and all elements $v\in V_k$ at any (finite) time step $t$. Moreover, note that for each agent $i\in\mathcal{N}$ and an element $v\in V_k$, we can view $x^t_{i,v}$ as an estimate of $\frac{1}{n}\sum_{j\in\mathcal{N}}(f_j(\{v\}\cup\bar{S}^k)-f_j(\bar{S}^k))$ at time step $t$. Thus, $\delta_k(t)$ captures the maximum error (in absolute value) of such estimates over all $i\in\mathcal{N}$ and all $v\in V_k$ at time step $t$. We will use the following result (e.g., \cite{diaconis1991geometric}).

\begin{lemma}
\label{lemma:finite time convergence of W}
Consider a weight matrix $W\in\mathbb{R}^{n\times n}$ that satisfies Assumption~$\ref{assumption:assumption of W}$. The following inequality holds:
\begin{equation*}
\mathop{\max}_{i\in\mathcal{N}}\sum_{j\in\mathcal{N}}\Big|(W^t)_{ij}-\frac{1}{n}\Big|\le\sqrt{n}(\mu(W))^t,
\end{equation*}
where $\mu(W)=\mathop{\max}\{\lambda_2({W}),-\lambda_n(W)\}$.
\end{lemma}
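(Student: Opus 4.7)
The plan is to exploit the spectral decomposition of $W$ afforded by Assumption~3. Since $W$ is symmetric and doubly stochastic with $W\mathbf{1}_n = \mathbf{1}_n$, it admits an orthonormal eigenbasis $u_1,\ldots,u_n$ with real eigenvalues $1 = \lambda_1 \ge \lambda_2 \ge \cdots \ge \lambda_n \ge -1$, where $u_1 = \frac{1}{\sqrt{n}}\mathbf{1}_n$. Writing $W^t = \sum_{k=1}^n \lambda_k^t u_k u_k'$ and peeling off the top mode gives $W^t - \frac{1}{n}\mathbf{1}_n\mathbf{1}_n' = \sum_{k=2}^n \lambda_k^t u_k u_k'$, whose spectral radius is at most $\mu(W)^t$ by the definition of $\mu(W)$.

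Next, I would translate the row-wise $\ell_1$ quantity of interest into an operator-norm statement. For fixed $i \in \mathcal{N}$, symmetry of $W^t$ lets me write the $i$th row as $W^t e_i$, so
\begin{equation*}
\sum_{j\in\mathcal{N}}\Big|(W^t)_{ij}-\tfrac{1}{n}\Big| = \Big\|W^t e_i - \tfrac{1}{n}\mathbf{1}_n\Big\|_1.
\end{equation*}
Using $W^t\mathbf{1}_n = \mathbf{1}_n$, the right-hand side equals $\|W^t(e_i - \tfrac{1}{n}\mathbf{1}_n)\|_1$. The vector $v_i \triangleq e_i - \tfrac{1}{n}\mathbf{1}_n$ is orthogonal to $\mathbf{1}_n$, hence lies in the span of $u_2,\ldots,u_n$, and a direct computation gives $\|v_i\|_2^2 = 1 - \tfrac{1}{n} \le 1$.

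Then I would combine two standard inequalities. First, the Cauchy--Schwarz bound $\|y\|_1 \le \sqrt{n}\,\|y\|_2$ for any $y \in \mathbb{R}^n$ converts the $\ell_1$ norm above to an $\ell_2$ norm at the cost of the factor $\sqrt{n}$. Second, expanding $v_i = \sum_{k=2}^n c_k u_k$ yields
\begin{equation*}
\|W^t v_i\|_2^2 = \sum_{k=2}^n \lambda_k^{2t} c_k^2 \le \mu(W)^{2t}\sum_{k=2}^n c_k^2 = \mu(W)^{2t}\|v_i\|_2^2 \le \mu(W)^{2t}.
\end{equation*}
Chaining these two estimates produces $\sum_{j}|(W^t)_{ij} - 1/n| \le \sqrt{n}\,\mu(W)^t$, and since the bound is uniform in $i$ the lemma follows.

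No step here is a serious obstacle, since the argument is entirely linear algebraic once Assumption~3 is invoked. The only subtlety worth flagging is making sure the norm conversion is applied in the right direction: one needs $\|\cdot\|_1 \le \sqrt{n}\|\cdot\|_2$ (not the reverse), and the residual operator $W^t - \tfrac{1}{n}\mathbf{1}_n\mathbf{1}_n'$ must be applied to a vector that is already orthogonal to $\mathbf{1}_n$, so that its spectral radius (rather than its operator norm, which is $1$) governs the decay.
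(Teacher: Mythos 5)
Your proof is correct. Note that the paper does not actually prove this lemma---it is quoted as a known result from the cited reference (Diaconis--Stroock)---and your spectral argument (diagonalize $W$, peel off the $\tfrac{1}{n}\mathbf{1}_n\mathbf{1}_n'$ mode, apply the residual to $e_i-\tfrac{1}{n}\mathbf{1}_n\perp\mathbf{1}_n$, and convert $\ell_2$ to $\ell_1$ at a cost of $\sqrt{n}$) is exactly the standard derivation of that bound specialized to the symmetric doubly stochastic case; it even shows the slightly sharper constant $\sqrt{n-1}$ via $\|e_i-\tfrac{1}{n}\mathbf{1}_n\|_2^2=1-\tfrac{1}{n}$.
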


We then have the following results.
\begin{lemma}
\label{lemma:upper bound on the estimate error}
Consider the update rule \eqref{eqn:decentralized updating rule} initialized with $x_i^0$ given by Eq.~\eqref{eqn:initial x_t}. Suppose Assumptions~$\ref{assumption:central clock}$ and $\ref{assumption:assumption of W}$ hold. For all time steps $t\in\mathbb{Z}_{\ge1}$, the following inequality holds:
\begin{equation}
\label{eqn:bound on delta kt}
\delta_k(t) \le \sqrt{n}(\mu(W))^t F_h,
\end{equation}
where $\delta_k(t)$ is defined in Eq.~\eqref{eqn:definition of consensus error} and $F_h\triangleq\mathop{\max}_{i\in\mathcal{N}}f_i(V)$.
\end{lemma}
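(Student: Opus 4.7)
The plan is to exploit the linearity of the update rule \eqref{eqn:decentralized updating rule} to write the iterates explicitly in terms of the initial vectors $x_j^0$ and the powers of $W$, and then to combine this with Lemma~\ref{lemma:finite time convergence of W} and a uniform bound on the magnitude of the $x_{j,v}^0$.

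First, I would observe that the update rule \eqref{eqn:decentralized updating rule} is entry-wise linear, so that stacking $\{x_i^t\}_{i\in\mathcal{N}}$ and unrolling gives $x_{i,v}^t = \sum_{j\in\mathcal{N}} (W^t)_{ij}\, x_{j,v}^0$ for every coordinate $v\in V_k$ and every $t\in\mathbb{Z}_{\ge 1}$. Using the fact that $\frac{1}{n}\sum_{j\in\mathcal{N}} x_{j,v}^0 = \sum_{j\in\mathcal{N}} \tfrac{1}{n}\, x_{j,v}^0$, I would write
\begin{equation*}
x_{i,v}^t - \frac{1}{n}\sum_{j\in\mathcal{N}} x_{j,v}^0 = \sum_{j\in\mathcal{N}}\Bigl((W^t)_{ij} - \tfrac{1}{n}\Bigr) x_{j,v}^0.
\end{equation*}

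Next I would bound $|x_{j,v}^0|$ uniformly. Since each $f_j$ is monotone nondecreasing with $f_j(\emptyset)=0$, the marginal $x_{j,v}^0 = f_j(\{v\}\cup \bar{S}^k) - f_j(\bar{S}^k)$ is nonnegative and is bounded above by $f_j(V) \le F_h$ (the nondecreasing property gives both inequalities, monotonicity being the only function property needed here). Applying the triangle inequality and pulling this uniform bound out yields
\begin{equation*}
\Bigl| x_{i,v}^t - \tfrac{1}{n}\sum_{j\in\mathcal{N}} x_{j,v}^0 \Bigr| \le F_h \sum_{j\in\mathcal{N}} \Bigl| (W^t)_{ij} - \tfrac{1}{n} \Bigr|.
\end{equation*}

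Finally, invoking Lemma~\ref{lemma:finite time convergence of W}, which applies because $W$ satisfies Assumption~\ref{assumption:assumption of W}, the right-hand side is at most $F_h \sqrt{n}\,(\mu(W))^t$ uniformly in $i$. Taking the maximum over $i\in\mathcal{N}$ and $v\in V_k$ in the definition \eqref{eqn:definition of consensus error} of $\delta_k(t)$ then gives exactly \eqref{eqn:bound on delta kt}. There is no real obstacle here once the initial bound $x_{j,v}^0\le F_h$ is noted; the only subtlety is recognizing that only monotonicity (not submodularity) is used for that bound and that the linearity of \eqref{eqn:decentralized updating rule} lets us apply Lemma~\ref{lemma:finite time convergence of W} coordinate-wise.
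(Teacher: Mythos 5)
Your proof is correct and follows essentially the same route as the paper's: unroll the linear update to get $x_{i,v}^t=\sum_{j}(W^t)_{ij}x_{j,v}^0$, subtract the average, apply the triangle inequality together with Lemma~\ref{lemma:finite time convergence of W}, and bound $0\le x_{j,v}^0\le f_j(V)\le F_h$ using only monotonicity and $f_j(\emptyset)=0$. The paper even confirms your closing observation in a footnote, noting that submodularity could be used to tighten the constant to $\max_i\max_v f_i(v)$.
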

\begin{proof}
Denoting $\bar{x}^t_v=\begin{bmatrix}x^t_{1,v} & \cdots & x^t_{n,v}\end{bmatrix}^{\prime}$ for all $v\in V_k$ and for all $t\in\mathbb{Z}_{\ge0}$, we have from Eq.~\eqref{eqn:decentralized updating rule} $\bar{x}^{t+1}_v=W\bar{x}^t_v$, which implies $\bar{x}^t_v=W^t\bar{x}^0_v$. It then follows that 
\begin{align}\nonumber
\delta_k(t)&=\mathop{\max}_{i\in\mathcal{N},v\in V_k}\Big|(W^t)_i\bar{x}^0_v-\frac{1}{n}\sum_{j\in\mathcal{N}}x^0_{j,v}\Big|\\\nonumber
&=\mathop{\max}_{i\in\mathcal{N},v\in V_k}\Big|\sum_{j\in\mathcal{N}}(W^t)_{ij}x_{j,v}^0-\frac{1}{n}\sum_{j\in\mathcal{N}}x^0_{j,v}\Big|\\\nonumber
&=\mathop{\max}_{v\in V_k}\mathop{\max}_{i\in\mathcal{N}}\Big|\sum_{j\in\mathcal{N}}((W^t)_{ij}-\frac{1}{n})x^0_{j,v}\Big|\\\nonumber
&\le\mathop{\max}_{v\in V_k}\mathop{\max}_{i\in\mathcal{N}}\big(\sum_{j\in\mathcal{N}}\Big|(W^t)_{ij}-\frac{1}{n}\Big|x^0_{j,v}\big)\\
&\le\sqrt{n}(\mu(W))^t\mathop{\max}_{j\in \mathcal{N}}\mathop{\max}_{v\in V_k}x_{j,v}^0\le \sqrt{n}(\mu(W))^tF_h,\label{eqn:bound on delta_1^t}
\end{align}
where the first inequality in \eqref{eqn:bound on delta_1^t} follows from Lemma~$\ref{lemma:finite time convergence of W}$. For the second inequality in \eqref{eqn:bound on delta_1^t}, we note that $x_{j,v}^0=f_j(\{v\}\cup\bar{S}_i^k)-f_j(\bar{S}_i^k)\le f_j(V)$ for all $j\in\mathcal{N}$ and for all $v\in V_k$, since $f_j(\cdot)$ is monotone nondecreasing with $f_j(\emptyset)=0$.\footnote{Noting that $f_j(\{v\}\cup\bar{S}_i^k)-f_j(\bar{S}_i^k)\le f_j(v)-f_j(\emptyset)\le f_j(V)$ $\forall j\in\mathcal{N}$ and $\forall v\in V_k$ by the submodularity of $f_j(\cdot)$, the bound in \eqref{eqn:bound on delta kt} can potentially be tightened by defining $F'_h\triangleq\mathop{\max}_{i\in\mathcal{N}}\mathop{\max}_{v\in V}f_i(v)$.} Thus, we have $\mathop{\max}_{j\in \mathcal{N}}\mathop{\max}_{v\in V_k}x_{j,v}^0\le\mathop{\max}_{j\in\mathcal{N}}f_j(V)= F_h$.
\end{proof}

\begin{lemma}
\label{lemma:elements close to the maximum}
Consider the update rule \eqref{eqn:decentralized updating rule} initialized with $x_i^0$ given by Eq.~\eqref{eqn:initial x_t}. Suppose Assumptions~$\ref{assumption:central clock}$ and $\ref{assumption:assumption of W}$ hold. For each time step $t\in\mathbb{Z}_{\ge1}$, denote  $x_{i,v_{i^*}}^t=\mathop{\max}_{v\in V_k}x_{i,v}^t$, where $v_i^*\triangleq\mathop{\arg}{\max}_{v\in V_k}x_{i,v}^t$, for all $i\in\mathcal{N}$. The following holds:
\begin{equation}
x_{i,v_{j^*}}^t\ge x_{i,v_{i^*}}^t-4\epsilon(t),\forall i,j\in\mathcal{N},
\end{equation}
where 
\begin{equation}
\label{eqn:def of epsilon 1}
\epsilon(t)\triangleq\sqrt{n}(\mu(W))^tF_h
\end{equation}
 is a function of $t\in\mathbb{Z}_{\ge1}$, and $F_h=\mathop{\max}_{i\in\mathcal{N}}f_i(V)$.
\end{lemma}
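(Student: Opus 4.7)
The plan is to chain together four applications of Lemma~\ref{lemma:upper bound on the estimate error}, exploiting the fact that $v_{j^*}$ is the local argmax for agent $j$ at time $t$. First, introduce the shorthand $\bar{x}_v \triangleq \frac{1}{n}\sum_{\ell\in\mathcal{N}} x_{\ell,v}^0$ for the consensus limit. By Lemma~\ref{lemma:upper bound on the estimate error}, for every agent $i\in\mathcal{N}$ and every element $v\in V_k$ we have the two-sided bound $|x_{i,v}^t - \bar{x}_v|\le\epsilon(t)$ at any time $t\in\mathbb{Z}_{\ge 1}$.

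Next I would build the chain in four steps. Starting at agent $i$ evaluating $v_{j^*}$, apply the lower bound $x_{i,v_{j^*}}^t\ge\bar{x}_{v_{j^*}}-\epsilon(t)$. Then move to agent $j$ via $\bar{x}_{v_{j^*}}\ge x_{j,v_{j^*}}^t-\epsilon(t)$. At this point use the optimality of $v_{j^*}$ for agent $j$, which gives $x_{j,v_{j^*}}^t\ge x_{j,v_{i^*}}^t$ at no cost in $\epsilon(t)$. Finally, return to the consensus value and then to agent $i$ through $x_{j,v_{i^*}}^t\ge\bar{x}_{v_{i^*}}-\epsilon(t)$ and $\bar{x}_{v_{i^*}}\ge x_{i,v_{i^*}}^t-\epsilon(t)$. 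Concatenating these five inequalities yields exactly $x_{i,v_{j^*}}^t\ge x_{i,v_{i^*}}^t-4\epsilon(t)$.

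There is no real obstacle here; the argument is a standard consensus-error triangulation. The only thing to be careful about is bookkeeping the direction of each inequality so that the four instances of $\epsilon(t)$ all accumulate with the correct sign (each application loses $\epsilon(t)$), and noting that the step invoking $v_{j^*}=\arg\max_{v\in V_k} x_{j,v}^t$ contributes no error term. Since the statement and Lemma~\ref{lemma:upper bound on the estimate error} use the same $\epsilon(t)=\sqrt{n}(\mu(W))^t F_h$, no additional estimates are needed and the proof is essentially a two-line chain of inequalities.
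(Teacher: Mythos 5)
Your proof is correct and is essentially the paper's own argument: the paper likewise triangulates through the consensus value $\frac{1}{n}\sum_{q\in\mathcal{N}}x^0_{q,v}$ via four applications of Lemma~\ref{lemma:upper bound on the estimate error} (grouped there as two bounds of $2\epsilon(t)$ each) and then invokes $x_{j,v_{i^*}}^t\le x_{j,v_{j^*}}^t$ at no cost. No differences worth noting.
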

\begin{proof}
Consider any time step $t\in\mathbb{Z}_{\ge1}$, and any two (distinct) agents $i,j\in\mathcal{N}$. We have from Lemma~$\ref{lemma:upper bound on the estimate error}$
\begin{equation}
\label{eqn:inequality for derivation 1}
\Big|x_{i,v_{i^*}}^t-\frac{1}{n}\sum_{q\in\mathcal{N}}x^0_{q,v_{i^*}}\Big|\le\epsilon(t),
\end{equation}
and
\begin{equation} 
\label{eqn:inequality for derivation 2}
\Big|x_{j,v_{i^*}}^t-\frac{1}{n}\sum_{q\in\mathcal{N}}x^0_{q,v_{i^*}}\Big|\le\epsilon(t).
\end{equation}
It then follows from \eqref{eqn:inequality for derivation 1}-\eqref{eqn:inequality for derivation 2} that
\begin{equation} 
\label{eqn:inequality for derivation 3}
\Big|x_{i,v_{i^*}}^t-x_{j,v_{i^*}}^t\Big|\le2\epsilon(t).
\end{equation}
Similarly, we have
\begin{equation} 
\label{eqn:inequality for derivation 4}
\Big|x_{j,v_{j^*}}^t-x_{i,v_{j^*}}^t\Big|\le2\epsilon(t).
\end{equation}
Therefore, we have the following:
\begin{equation*}
\label{eqn:eqn:inequality for derivation 5}
x_{i,v_{i^*}}^t-x_{i,v_{j^*}}^t\le x_{j,v_{i^*}}^t+2\epsilon(t)-x_{j,v_{j^*}}^t+2\epsilon(t)\le4\epsilon(t),
\end{equation*}
where the first inequality follows from  \eqref{eqn:inequality for derivation 3}-\eqref{eqn:inequality for derivation 4} and the second inequality follows from the fact $x_{j,v_{j^*}}^t=\mathop{\max}_{v\in V_k}x_{j,v}^t$, i.e. $x_{j,v_{i^*}}^t\le x_{j,v_{j^*}}^t$. 
\end{proof}

Note that each agent $i\in\mathcal{N}$ updates $x_i^t$ from time step $t=0$ to time step $t=T$, where we recall from Assumption~$\ref{assumption:central clock}$ that $T\in\mathbb{Z}_{\ge1}$ is a design parameter of the algorithm that is known to all the agents in $\mathcal{N}$. 

Next, we describe the update rule for $S_i^t$. Specifically, after running update rule \eqref{eqn:decentralized  updating rule} from time step $t=0$ to time step $t=T$ and obtaining $x_i^T$, each agent $i\in\mathcal{N}$ obtains $S_i^{T+1}$ as
\begin{equation}
\label{eqn:set of candidate updating elements}
S_i^{T+1}=\{v: x_{i,v}^T\ge x_{i,v_{i^*}}^T-\psi\}, 
\end{equation}
where $\psi\in\mathbb{R}_{\ge0}$ is a design parameter of the algorithm that needs to satisfy the following condition:
\begin{equation}
\label{eqn:cond on epsilon}
\psi\ge4\sqrt{n}(\mu(W))^TF_h.
\end{equation}
Note that we also assume that each agent $i\in\mathcal{N}$ knows the design parameter $\psi$. We then see from Lemma~$\ref{lemma:elements close to the maximum}$ and update rule \eqref{eqn:set of candidate updating elements} with condition \eqref{eqn:cond on epsilon} that $v_{j^*}\in S_i^{T+1}$ for all $j\in\mathcal{N}$, where $v_{j^*}=\mathop{\arg}{\max}_{v\in V_k}x_{j,v}^T$. This implies that $V^*\subseteq S_i^{T+1}$ for all $i\in\mathcal{N}$, where $V^*\triangleq \{v_{1^*},\dots,v_{n^*}\}$. Starting from $S_i^{T+1}$ at time step $t=T+1$, each agent $i\in\mathcal{N}$ computes $S_i^t$ according to the following update rule:
\begin{equation}
\label{eqn:decentralized updating rule phase 2}
S_i^{t+1}=\bigcap_{j\in\mathcal{N}_i}S_j^t.
\end{equation}

We will use the following result whose proof follows directly from induction and is thus omitted for conciseness.
\begin{lemma}
\label{lemma:convergence of decentralized updating rule phase 2}
Consider the communication graph $\mathcal{G}=(\mathcal{N},\mathcal{E})$ and the update rule \eqref{eqn:decentralized updating rule phase 2} initialized with $S_i^{T+1}$ given by Eq.~\eqref{eqn:set of candidate updating elements}. Suppose Assumption~$\ref{assumption:central clock}$ holds. For each $t\ge T+1+d(\mathcal{G})$ ($t\in\mathbb{Z}$), $S_i^t=\bigcap_{j\in\mathcal{N}}S_j^{T+1}$ for all $i\in\mathcal{N}$, where $d(\mathcal{G})$ is given by Definition~$\ref{def:diameter of graph}$.
\end{lemma}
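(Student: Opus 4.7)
The plan is a straightforward induction on the number of communication steps elapsed after $t=T+1$. Specifically, I will prove the stronger statement that for every $i\in\mathcal{N}$ and every $\tau\in\mathbb{Z}_{\ge 0}$,
\[
S_i^{T+1+\tau} \;=\; \bigcap_{j\in\mathcal{N}:\, l_{ij}\le \tau} S_j^{T+1},
\]
where $l_{ij}$ is the shortest-path distance in $\mathcal{G}$. The lemma then follows by setting $\tau=d(\mathcal{G})$, since by Definition~\ref{def:diameter of graph} every $j\in\mathcal{N}$ satisfies $l_{ij}\le d(\mathcal{G})$, so the right-hand side collapses to $\bigcap_{j\in\mathcal{N}}S_j^{T+1}$.

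The base case $\tau=0$ is tautological. For the inductive step, I apply update rule~\eqref{eqn:decentralized updating rule phase 2} (with the natural self-loop reading $k\in\mathcal{N}_i\cup\{i\}$, matching the $w_{ii}$ self-weight used in the consensus update for $x_i^t$), substitute the inductive hypothesis for each $S_k^{T+1+\tau}$, and swap the order of the two intersections to obtain
\[
S_i^{T+2+\tau} \;=\; \bigcap_{j:\,\exists\, k\in\mathcal{N}_i\cup\{i\},\, l_{kj}\le \tau} S_j^{T+1}.
\]
The key graph-theoretic identity is that the index set on the right equals $\{j : l_{ij}\le \tau+1\}$: the forward inclusion uses the triangle inequality $l_{ij}\le l_{ik}+l_{kj}\le 1+\tau$, and the reverse inclusion picks $k$ to be either the second vertex on a shortest path from $i$ to $j$, or $k=i$ if $j=i$.

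For $t>T+1+d(\mathcal{G})$, I observe that once all $S_j^{T+1+d(\mathcal{G})}$ coincide with the common set $\bigcap_{q\in\mathcal{N}}S_q^{T+1}$, the intersection update leaves this common set fixed, so the conclusion propagates to all larger $t$ by a trivial additional induction.

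The only real subtlety, which I would make explicit at the start of the proof, is the self-loop convention: the induction needs $i\in\mathcal{N}_i\cup\{i\}$ in the intersection, otherwise a bipartite-parity obstruction could prevent a vertex's initial set from ever being incorporated into itself via a walk of length exactly $d(\mathcal{G})$. This is the standard reading of line~11 of Algorithm~\ref{algorithm:decentralized greedy} and mirrors the self-term already present in the $x_i^t$ update.
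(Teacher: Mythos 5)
Your induction on the number of elapsed communication steps, showing $S_i^{T+1+\tau}=\bigcap_{j:\,l_{ij}\le\tau}S_j^{T+1}$ and then setting $\tau=d(\mathcal{G})$, is exactly the argument the paper sketches (and omits as ``direct induction''), so the proposal is correct and takes essentially the same route. Your explicit remark about the self-loop convention is well taken: if line 11 of Algorithm~\ref{algorithm:decentralized greedy} were read with $i\notin\mathcal{N}_i$, the claim would actually fail (e.g., on a two-node graph the sets swap rather than intersect), so reading the update as ranging over $\mathcal{N}_i\cup\{i\}$ is necessary and worth stating.
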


Given any $T\in\mathbb{Z}_{\ge1}$, we then set the design parameter $T'=T+1+d(\mathcal{G})$ in the sequel. After running update rule \eqref{eqn:decentralized updating rule phase 2} until time step $t=T'$ (starting from time step $t=T+1$), we have from Lemma~$\ref{lemma:convergence of decentralized updating rule phase 2}$ $S_i^{T'}=S_j^{T'}$ for all $i,j\in\mathcal{N}$. Moreover, noting from the above arguments that $V^*\subseteq S_i^{T+1}$ for all $i\in\mathcal{N}$, where $V^*=\{v_{1^*},\dots,v_{n^*}\}$, we have $V^*\subseteq S_i^{T'}$, i.e., $S_i^{T'}\neq\emptyset$. Finally, denoting $j_{\text{min}}=\mathop{\min}\{j:v_j\in S_i^{T'}\}$, each agent $i\in\mathcal{N}$ updates $\bar{S}_i^k$ as
\begin{equation}
\label{eqn:decentralized updating rule phase 3}
\bar{S}^{k+1}_i = \{v_{j_{\text{min}}}\} \cup \bar{S}^k_i.\footnote{Note that all the agents in $\mathcal{N}$ label the elements in $V$ such that $v_j$ refers to the same element in $V$ for all $j\in\{1,\dots,m\}$.}
\end{equation}
Noting that $\bar{S}_i^k=\bar{S}_j^k$ for all $i,j\in\mathcal{N}$, we obtain $\bar{S}_i^{k+1}=\bar{S}_j^{k+1}$ for all $i,j\in\mathcal{N}$. Denote $\bar{S}^{k+1}=\bar{S}_i^{k+1}$. Combining update rules \eqref{eqn:decentralized updating rule} and \eqref{eqn:set of candidate updating elements}-\eqref{eqn:decentralized updating rule phase 3} leads to the following result.
\begin{lemma} 
\label{lemma:error bound for decentralized greedy}
Consider the update rules \eqref{eqn:decentralized updating rule}, \eqref{eqn:set of candidate updating elements} and \eqref{eqn:decentralized updating rule phase 2}-\eqref{eqn:decentralized updating rule phase 3}, where \eqref{eqn:decentralized updating rule} is  initialized with $x_i^0$ given by Eq.~\eqref{eqn:initial x_t}. Suppose Assumptions~$\ref{assumption:central clock}$ and $\ref{assumption:assumption of W}$ hold and $\psi\in\mathbb{R}_{\ge0}$ satisfies condition \eqref{eqn:cond on epsilon}. Then
\begin{equation}
\label{eqn:error bound of global objective}
f(\bar{S}^{k+1})-f(\bar{S}^k)\ge\{\mathop{\max}_{v\in V_k} f(\{v\}\cup\bar{S}^k)-f(\bar{S}^k)\}-\psi-2\epsilon(T),
\end{equation}
where $\epsilon(T)=\sqrt{n}(\mu(W))^TF_h$, and $F_h=\mathop{\max}_{i\in\mathcal{N}}f_i(V)$.
\end{lemma}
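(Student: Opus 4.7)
The plan is to show that the element $v_{j_{\min}}$ added in round $k+1$ achieves a marginal gain (under the true global $f$) that is within $\psi + 2\epsilon(T)$ of the best possible marginal gain over $V_k$. Everything will be driven by two facts already established: (i) each agent's estimate $x_{i,v}^T$ is within $\epsilon(T)$ of the true normalized marginal gain $f(\{v\}\cup\bar{S}^k)-f(\bar{S}^k)$ (Lemma~\ref{lemma:upper bound on the estimate error}, after recognizing $\frac{1}{n}\sum_{j\in\mathcal{N}}x_{j,v}^0$ as exactly that marginal gain by definition of $f$), and (ii) after $d(\mathcal{G})$ additional intersection rounds, every agent's set $S_i^{T'}$ equals the common intersection $\bigcap_{j\in\mathcal{N}} S_j^{T+1}$ (Lemma~\ref{lemma:convergence of decentralized updating rule phase 2}). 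In particular $v_{j_{\min}}\in S_j^{T+1}$ for every agent $j\in\mathcal{N}$, which by the definition~\eqref{eqn:set of candidate updating elements} of $S_j^{T+1}$ means $x_{j,v_{j_{\min}}}^T \ge x_{j,v_{j^*}}^T - \psi$ for each $j$.

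Next I would introduce $v^\star \in \arg\max_{v\in V_k} f(\{v\}\cup\bar{S}^k)$, the globally best next element in terms of the true marginal gain. Fix any single agent $j$ and chain the inequalities:
\begin{equation*}
x_{j,v_{j_{\min}}}^T \;\ge\; x_{j,v_{j^*}}^T - \psi \;\ge\; x_{j,v^\star}^T - \psi,
\end{equation*}
where the second step uses that $v_{j^*}$ is the argmax of $x_{j,\cdot}^T$ over $V_k$, so $x_{j,v_{j^*}}^T \ge x_{j,v^\star}^T$. Then two applications of Lemma~\ref{lemma:upper bound on the estimate error} (one at $v_{j_{\min}}$, one at $v^\star$) replace each $x_{j,\cdot}^T$ by the corresponding true marginal gain of $f$ at cost $\epsilon(T)$ each:
\begin{align*}
f(\{v_{j_{\min}}\}\cup\bar{S}^k)-f(\bar{S}^k) &\;\ge\; x_{j,v_{j_{\min}}}^T - \epsilon(T)\\
&\;\ge\; x_{j,v^\star}^T - \psi - \epsilon(T)\\
&\;\ge\; f(\{v^\star\}\cup\bar{S}^k)-f(\bar{S}^k) - \psi - 2\epsilon(T).
\end{align*}
Since $\bar{S}^{k+1}=\{v_{j_{\min}}\}\cup\bar{S}^k$ and $v^\star$ attains the max over $V_k$, this is exactly the claimed inequality~\eqref{eqn:error bound of global objective}.

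There is no genuinely hard step here; the content is really bookkeeping around two previously proved facts. The only thing that requires a moment of care is confirming that $v_{j_{\min}}$ belongs to $S_j^{T+1}$ for \emph{every} agent $j$, not merely for the agent whose index is $j_{\min}$'s locally chosen argmax; this is precisely what the intersection update~\eqref{eqn:decentralized updating rule phase 2} together with Lemma~\ref{lemma:convergence of decentralized updating rule phase 2} guarantees, which is why the extra $d(\mathcal{G})$ communication steps are built into $T'$. Once that is clear, the chain of estimates above yields the bound directly, with the $\psi$ coming from the acceptance threshold in~\eqref{eqn:set of candidate updating elements} and the $2\epsilon(T)$ coming from applying the consensus error bound once on each side of the comparison.
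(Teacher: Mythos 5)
Your proposal is correct and follows essentially the same route as the paper's proof: both use the consensus error bound of Lemma~\ref{lemma:upper bound on the estimate error} once at $v_{j_{\text{min}}}$ and once at the true maximizer $v^*$, the membership $v_{j_{\text{min}}}\in S_i^{T'}\subseteq S_i^{T+1}$ to invoke the $\psi$-threshold in \eqref{eqn:set of candidate updating elements}, and the fact that the local argmax dominates $x_{i,v^*}^T$, chained in the same order. The only difference is cosmetic (you bound the marginal gain of $v_{j_{\text{min}}}$ directly rather than bounding the difference of marginal gains from below by $-\psi-2\epsilon(T)$), so no further comment is needed.
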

\begin{proof}
Denote $v^*\triangleq\mathop{\arg}{\max}_{v\in V_k} f(\{v\}\cup\bar{S}^k)-f(\bar{S}^k)$ and  note that $\bar{S}^{k+1}\setminus\bar{S}^k=v_{j_{\text{min}}}$, where $j_{\text{min}}=\mathop{\min}\{j:v_j\in S_i^{T'}\}$. We see from Lemma~$\ref{lemma:upper bound on the estimate error}$ and the definition of $x_{j,v}^0$ that 
\begin{equation}
\label{eqn:error bound 1}
\Big|x_{i,v^*}^T-(f(\{v^*\}\cup\bar{S}^k)-f(\bar{S}^k))\Big|\le\epsilon(T),
\end{equation}
and
\begin{equation}
\label{eqn:error bound 2}
\Big|x_{i,v_{j_{\text{min}}}}^T-(f(\{v_{j_{\text{min}}}\}\cup\bar{S}^k)-f(\bar{S}^K))\Big|\le\epsilon(T).
\end{equation}
Noting that $S_i^{T'}\subseteq S_i^{T+1}$ and $v_{j_{\text{min}}}\in S_i^{T'}$, we have from update rule \eqref{eqn:set of candidate updating elements}
\begin{equation}
\label{eqn:error bound 3}
x_{i,v_{j_{\text{min}}}}^T\ge x_{i,v_{i^*}}^T-\psi,
\end{equation}
where $x_{i,v_{i^*}}^T=\mathop{\max}_{v\in V_k}x_{i,v}^T$. We then have the following:
\begin{align}\nonumber
&(f(\{v_{j_{\text{min}}}\}\cup\bar{S}^K)-f(\bar{S}^k))-(f(\{v^*\}\cup\bar{S}^k)-f(\bar{S}^k))\\
\ge& (f(\{v_{j_{\text{min}}}\}\cup\bar{S}^K)-f(\bar{S}^k))-x_{i,v^*}^T-\epsilon(T)\label{eqn:error bound derivation 1}\\
\ge& (f(\{v_{j_{\text{min}}}\}\cup\bar{S}^K)-f(\bar{S}^k))-x_{i,v_{i^*}}^T-\epsilon(T)\label{eqn:error bound derivation 2}\\
\ge& (f(\{v_{j_{\text{min}}}\}\cup\bar{S}^K)-f(\bar{S}^k))-x_{i,v_{j_{\text{min}}}}^{T}-\psi-\epsilon(T)\label{eqn:error bound derivation 3}\\
\ge& -\psi-2\epsilon(T),\label{eqn:error bound derivation 4}
\end{align}
where \eqref{eqn:error bound derivation 1} and \eqref{eqn:error bound derivation 4} follow from \eqref{eqn:error bound 1} and \eqref{eqn:error bound 2}, respectively,  \eqref{eqn:error bound derivation 3} follows from \eqref{eqn:error bound 3}, and \eqref{eqn:error bound derivation 2} follows from the fact $x_{i,v_{i^*}}^{T}=\mathop{\max}_{v\in V_k}x_{i,v}^{T}$. 
\end{proof}

In summary, after running  \eqref{eqn:decentralized updating rule}, \eqref{eqn:set of candidate updating elements} and \eqref{eqn:decentralized updating rule phase 2}-\eqref{eqn:decentralized updating rule phase 3} as described above in the $(k+1)$th round of the distributed greedy algorithm,  each agent $i\in\mathcal{N}$ obtains $x_i^{T}$, $S_i^{T'}$ and $\bar{S}_i^{k+1}$, where $T'=T+1+d(\mathcal{G})$ and $\bar{S}_i^{k+1}=\bar{S}_j^{k+1}$ for all $i,j\in\mathcal{N}$. The algorithm then enters the next round (with $k$ incremented by $1$) and repeats the same processes as described above, where all the results derived still hold.

The procedure described in this section is summarized in Algorithm~$\ref{algorithm:decentralized greedy}$, where the algorithm is implemented at each $i\in\mathcal{N}$ in a distributed way. As argued above, Algorithm~$\ref{algorithm:decentralized greedy}$ allows all the agents in $i\in\mathcal{N}$ to reach consensus at a solution $\bar{S}^K$ to Problem \eqref{eqn:global objective function}, i.e., $\bar{S}_i^K=\bar{S}^K$ $\forall i\in\mathcal{N}$.

\section{Convergence Analysis}\label{sec:submodular case}
In this section, we analyze the performance (i.e., convergence) of Algorithm~$\ref{algorithm:decentralized greedy}$. Note that the (centralized) greedy algorithm solves Problem \eqref{eqn:global objective function} in the centralized case with the multiplicative approximation ratio of $(1-1/e)$, i.e., $f(S_g)\ge(1-1/e)f(S^*)$, where $S_g$ is the solution returned by the greedy algorithm and $S^*$ is an optimal solution to Problem \eqref{eqn:global objective function}. Hence, we analyze the performance of Algorithm~$\ref{algorithm:decentralized greedy}$ by comparing the convergence of $f(\bar{S}_i^K)$ to $(1-1/e)f(S^*)$.  We will use the following result from \cite{streeter2009online}.
\begin{lemma}
\label{lemma:greedy algorithm with choice error}
Consider Problem \eqref{eqn:global objective function} in the centralized case. Denote $\bar{G}_1\triangleq\emptyset$ and $\bar{G}_j\triangleq\{\bar{g}_1,\dots,\bar{g}_{j-1}\}$ for all $j\in\{2,\dots,K+1\}$. Suppose
\begin{equation*}
f(\{\bar{g}_j\}\cup \bar{G}_j)-f(\bar{G}_j)\ge\{\mathop{\max}_{v\in V\setminus\bar{G}_j}(f(\{v\}\cup \bar{G}_j)-f(\bar{G}_j))\}-\tau_j
\end{equation*}
for all $j\in\{1,\dots,K\}$, where $\tau_j\in\mathbb{R}_{\ge0}$. Then
\begin{equation}
f(\bar{G}_{K+1})\ge (1-\frac{1}{e})f(S^*)-\sum_{k=1}^K\tau_k,
\end{equation}
where $S^*$ is an optimal solution to Problem \eqref{eqn:global objective function}.
\end{lemma}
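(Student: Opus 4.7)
The plan is to mimic the classical Nemhauser–Wolsey–Fisher analysis for the centralized greedy algorithm, tracking the extra additive slack $\tau_j$ that is introduced at each step. The proof rests on two standard consequences of monotonicity and submodularity of $f$ that hold for any $A\subseteq V$ and any $T\subseteq V$ with $|T|\le K$:
\begin{equation*}
f(T)-f(A)\le\sum_{v\in T\setminus A}\bigl(f(\{v\}\cup A)-f(A)\bigr)\le K\cdot\mathop{\max}_{v\in V\setminus A}\bigl(f(\{v\}\cup A)-f(A)\bigr).
\end{equation*}
Applying this with $A=\bar{G}_j$ and $T=S^*$ (so $|T|\le K$ and $f$ is monotone) yields a lower bound on the best remaining marginal, namely $\mathop{\max}_{v\in V\setminus\bar{G}_j}(f(\{v\}\cup\bar{G}_j)-f(\bar{G}_j))\ge\tfrac{1}{K}(f(S^*)-f(\bar{G}_j))$.

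Next I would combine this with the hypothesis on $\bar{g}_j$ to get
\begin{equation*}
f(\bar{G}_{j+1})-f(\bar{G}_j)\ge\tfrac{1}{K}\bigl(f(S^*)-f(\bar{G}_j)\bigr)-\tau_j,
\end{equation*}
which rearranges into the one-step recursion
\begin{equation*}
f(S^*)-f(\bar{G}_{j+1})\le\bigl(1-\tfrac{1}{K}\bigr)\bigl(f(S^*)-f(\bar{G}_j)\bigr)+\tau_j.
\end{equation*}
Unrolling this recursion from $j=1$ (where $\bar{G}_1=\emptyset$ so $f(\bar{G}_1)=0$) to $j=K$ gives
\begin{equation*}
f(S^*)-f(\bar{G}_{K+1})\le\bigl(1-\tfrac{1}{K}\bigr)^{K}f(S^*)+\sum_{k=1}^{K}\bigl(1-\tfrac{1}{K}\bigr)^{K-k}\tau_k.
\end{equation*}
Bounding $(1-1/K)^{K}\le 1/e$ and $(1-1/K)^{K-k}\le 1$ and rearranging yields the claimed inequality $f(\bar{G}_{K+1})\ge(1-1/e)f(S^*)-\sum_{k=1}^{K}\tau_k$.

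I do not anticipate any real obstacle; this is essentially the textbook argument with an additive error budget carried through. The only place that requires a little care is the fact that the additive errors accumulate \emph{linearly} (with coefficient $(1-1/K)^{K-k}\le 1$) rather than being amplified by the $K$ factor that appears in the ``max marginal'' bound, which is exactly why the final error term is $\sum_k\tau_k$ and not $K\sum_k\tau_k$; it is worth stating explicitly in the write-up that the $\tau_j$ enters the recursion \emph{after} the factor $1/K$ has already been used to lower-bound the best marginal, so no extra $K$ multiplies the $\tau_j$. Everything else is a routine induction and the standard $(1-1/K)^K\le 1/e$ bound.
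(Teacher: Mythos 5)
Your proof is correct and follows essentially the same route the paper takes: the paper cites this lemma from Streeter--Golovin and proves only its nonsubmodular generalization (Lemma~7) in the appendix, and your argument is exactly that appendix proof specialized to $\gamma=1$ --- the same recursion $f(S^*)-f(\bar{G}_{j+1})\le(1-\tfrac{1}{K})(f(S^*)-f(\bar{G}_j))+\tau_j$, unrolled with $(1-1/K)^K\le 1/e$. Your remark that the $\tau_j$ enter after the $1/K$ factor (so they are not amplified by $K$) is a worthwhile clarification that the paper's derivation leaves implicit.
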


The result below follows directly from Lemmas~$\ref{lemma:error bound for decentralized greedy}$ and $\ref{lemma:greedy algorithm with choice error}$.
\begin{theorem}
\label{thm:convergence of decentralized greedy}
Consider Algorithm~$\ref{algorithm:decentralized greedy}$ for Problem \eqref{eqn:global objective function} with a set $\mathcal{N}$ of agents. Suppose Assumptions~$\ref{assumption:central clock}$ and $\ref{assumption:assumption of W}$ hold and $\psi\in\mathbb{R}_{\ge0}$ satisfies condition \eqref{eqn:cond on epsilon}. Then Algorithm~$\ref{algorithm:decentralized greedy}$ lets all the agents in $\mathcal{N}$ reach consensus at a solution $\bar{S}^K$ to Problem  \eqref{eqn:global objective function}  that satisfies 
\begin{equation*}
f(\bar{S}^K)\ge (1-\frac{1}{e}) f(S^*)-K(\psi+2\epsilon(T)),
\end{equation*}
where $\epsilon(T)=\sqrt{n}(\mu(W))^TF_h$ with $F_h=\mathop{\max}_{i\in\mathcal{N}}f_i(V)$, and $S^*$ is an optimal solution to Problem \eqref{eqn:global objective function}.
\end{theorem}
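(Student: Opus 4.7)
The plan is to observe that Algorithm~\ref{algorithm:decentralized greedy} essentially executes a centralized greedy procedure in which each greedy choice is made only approximately, and to quantify that approximation using Lemma~\ref{lemma:error bound for decentralized greedy}, then feed the result into the robust greedy bound of Lemma~\ref{lemma:greedy algorithm with choice error}. First I would verify that the consensus property $\bar{S}_i^k=\bar{S}_j^k$ for all $i,j\in\mathcal{N}$ and all $k\in\{0,\dots,K\}$ holds, so that the common sequence $\bar{S}^0=\emptyset,\bar{S}^1,\dots,\bar{S}^K$ is well-defined; this was argued inductively in the algorithm description (each round uses the deterministic tie-breaking rule $j_{\min}=\min\{j:v_j\in S_i^{T'}\}$ and Lemma~\ref{lemma:convergence of decentralized updating rule phase 2} guarantees $S_i^{T'}=S_j^{T'}$).

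Next I would apply Lemma~\ref{lemma:error bound for decentralized greedy} at each round $k\in\{0,\dots,K-1\}$. Since Assumptions~\ref{assumption:central clock} and \ref{assumption:assumption of W} hold and $\psi$ satisfies condition~\eqref{eqn:cond on epsilon}, the lemma yields
\begin{equation*}
f(\bar{S}^{k+1})-f(\bar{S}^k)\ge\Bigl\{\mathop{\max}_{v\in V\setminus\bar{S}^k}\bigl(f(\{v\}\cup\bar{S}^k)-f(\bar{S}^k)\bigr)\Bigr\}-\bigl(\psi+2\epsilon(T)\bigr),
\end{equation*}
where $\epsilon(T)=\sqrt{n}(\mu(W))^T F_h$. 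In other words, the element $v_{j_{\min}}$ added in round $k+1$ is within additive slack $\tau_{k+1}\triangleq\psi+2\epsilon(T)$ of the best possible greedy choice given $\bar{S}^k$.

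I would then invoke Lemma~\ref{lemma:greedy algorithm with choice error} with the identification $\bar{G}_{j}=\bar{S}^{j-1}$, $\bar{g}_j=v_{j_{\min}}$ (the element added in the $j$th round), and $\tau_j=\psi+2\epsilon(T)$ for all $j\in\{1,\dots,K\}$. The hypothesis of Lemma~\ref{lemma:greedy algorithm with choice error} is exactly what the previous paragraph established. The conclusion becomes
\begin{equation*}
f(\bar{S}^K)=f(\bar{G}_{K+1})\ge\Bigl(1-\frac{1}{e}\Bigr)f(S^*)-\sum_{j=1}^{K}\tau_j=\Bigl(1-\frac{1}{e}\Bigr)f(S^*)-K\bigl(\psi+2\epsilon(T)\bigr),
\end{equation*}
which is precisely the claimed bound.

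Because both ingredients are already stated as lemmas, there is no genuinely hard step here; the proof is essentially a bookkeeping composition. The only subtlety I would take care to mention is that the hypothesis of Lemma~\ref{lemma:greedy algorithm with choice error} is naturally expressed in terms of $V\setminus\bar{G}_j$, while Lemma~\ref{lemma:error bound for decentralized greedy} is expressed in terms of $V_k=V\setminus\bar{S}^k$; under the identification $\bar{G}_j=\bar{S}^{j-1}$ these coincide, so the match is clean. The main obstacle, if any, is simply making the indexing consistent between the two lemmas and verifying that the per-round slack $\tau_j$ is indeed uniform in $j$ (which it is, since $\psi$, $T$, $n$, $\mu(W)$, and $F_h$ are all fixed across rounds).
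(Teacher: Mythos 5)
Your proposal is correct and matches the paper's intended argument exactly: the paper states that Theorem~\ref{thm:convergence of decentralized greedy} follows directly from Lemmas~\ref{lemma:error bound for decentralized greedy} and \ref{lemma:greedy algorithm with choice error}, and your composition with $\bar{G}_j=\bar{S}^{j-1}$ and $\tau_j=\psi+2\epsilon(T)$ is precisely that bookkeeping, made explicit.
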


Theorem $\ref{thm:convergence of decentralized greedy}$ shows that $\bar{S}_K$ approaches the $(1-1/e)$ approximation of $S^*$ with an additive factor $E_r\triangleq K(\psi+2\epsilon(T))$. We know from the definition of $\epsilon(T)$ that $E_r$ is a function of the number of the agents, the bound on the local functions and the design parameters (i.e., $W$, $T$ and $\psi$). In the context of Theorem $\ref{thm:convergence of decentralized greedy}$, we analyze how the additive factor (i.e., $E_r$) behaves in terms of those quantities under different scenarios. In particular, we are interested in how the additive factor depends on the number of communication steps in each round of the algorithm. First, let us consider the fast communication scenario (e.g.,  \cite{carli2008distributed}). In this scenario, agents can communicate sufficiently fast, i.e., $T\to\infty$, in each round of the distributed greedy algorithm. Since $\mu(W)<1$ from Assumption~$\ref{assumption:assumption of W}$, $\epsilon(T)\to0$ as $T\to\infty$. Moreover, the lower bound on $\psi$ in \eqref{eqn:cond on epsilon} tends to zero. Consequently, we can choose the design parameter $\psi$ to be arbitrarily close to zero and obtain $E_r\to 0$.

Next, we consider the scenario where the communication among the agents in each round of the distributed algorithm is limited. Suppose $n$ is fixed and the input to Algorithm~$\ref{algorithm:decentralized greedy}$ is also fixed, i.e., $K$ and $F_h$  are fixed. We then have $E_r=K\psi+O((\mu(W))^T)$. If we can choose the design parameter $\psi$ such that $\psi=O((\mu(W))^T)$, we obtain $E_r=O((\mu(W))^T)$, which implies that $E_r$ vanishes at an exponential rate. In contrast, if we assume that $\psi$ is fixed, we have $E_r=K\psi+O((\mu(W))^T)$, which implies that $E_r$ converges exponentially to $K\psi$.

Indeed, using techniques in, e.g., \cite{boyd2004fastest}, one can optimally choose the weight matrix $W$ such that $\mu(W)$ is minimized in the above scenarios, which leads to accelerations in the convergence rate. In summary, we observe a tradeoff between the performance of the distributed greedy algorithm and the number of communication steps in each round of the algorithm. Moreover, the performance of the algorithm also depends on the choice of $\psi$. It is also worth noting that our distributed greedy algorithm achieves exponential convergence rates (as described above), while the algorithm proposed in \cite{mokhtari2018decentralized} only achieves sublinear convergence rates.

\section{Nonsubmodular Objective Functions}
In this section, we extend our previous analysis to cases when the objective functions in Problem \eqref{eqn:global objective function} are nonsubmodular. In other words, we consider the scenario where the local objective function $f_i(\cdot)$ is monotone nondecreasing with $f_i(\emptyset)=0$, but not necessarily submodular, for all $i\in\mathcal{N}$. We first note that the (centralized) greedy algorithm has also been applied to solve Problem \eqref{eqn:global objective function} with nonsubmodular objective functions using the notion of submodularity ratio (e.g., \cite{bian2017guarantees}). 
\begin{definition}(Submodularity ratio)
\label{def:submodularity ratio}
Given a set $V$, the submodularity ratio of a nonnegative set function $f:2^{V}\to\mathbb{R}_{\ge0}$ is the largest $\gamma\in\mathbb{R}_{\ge0}$ that satisfies $\sum_{a\in A\setminus B}\big(f(\{a\}\cup B)-f(B)\big)\ge\gamma\big(f(A\cup B)-f(B)\big)$
for all $A,B\subseteq V$.
\end{definition} 
\begin{remark}
For a nonnegative and nondecreasing function set $f(\cdot)$ with submodularity ratio $\gamma$, we have $\gamma\in[0,1]$, and $f(\cdot)$ is submodular if and only if $\gamma=1$  \cite{bian2017guarantees}. %Note that although the exact value of $\gamma$ for a given nonnegative set function $f(\cdot)$ could be difficult to compute, lower bounds on $\gamma$ can be obtained in polynomial time (e.g., \cite{das2018approximate,bian2017guarantees,ye2019sensor})
\end{remark}

We now extend Lemma~$\ref{lemma:greedy algorithm with choice error}$ to nonsubmodular functions; a proof of the following result is included in the appendix.
\begin{lemma}
\label{lemma:greedy algorithm with choice error nonsubmodular}
Consider Problem \eqref{eqn:global objective function} in the centralized case, where the objective function $f(\cdot)$ is monotone nondecreasing with submodularity ratio $\gamma\in\mathbb{R}_{>0}$. Denote $\bar{G}_1\triangleq\emptyset$ and $\bar{G}_j\triangleq\{\bar{g}_1,\dots,\bar{g}_{j-1}\}$ for all $j\in\{2,\dots,K+1\}$. Suppose
\begin{equation*}
f(\{\bar{g}_j\}\cup \bar{G}_j)-f(\bar{G}_j)\ge\{\mathop{\max}_{v\in V\setminus\bar{G}_j}(f(\{v\}\cup \bar{G}_j)-f(\bar{G}_j))\}-\tau_j
\end{equation*}
for all $j\in\{1,\dots,K\}$, where $\tau_j\in\mathbb{R}_{\ge0}$. Then
\begin{equation}
f(\bar{G}_{K+1})\ge (1-e^{-\gamma})f(S^*)-\sum_{k=1}^K\tau_k,
\end{equation}
where $S^*$ is an optimal solution to Problem \eqref{eqn:global objective function}.
\end{lemma}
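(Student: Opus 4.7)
The plan is to mimic the classical Nemhauser--Wolsey--Fisher analysis of the greedy algorithm for submodular maximization, replacing the use of submodularity by an application of the submodularity ratio $\gamma$. The key observation is that the submodularity ratio lets us relate the remaining gap $f(S^*)-f(\bar{G}_j)$ to the best single-element marginal gain available at step $j$, which is precisely what is needed to close a one-step recursion on $f(S^*)-f(\bar{G}_j)$.

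First, I would apply Definition~\ref{def:submodularity ratio} with $A=S^*$ and $B=\bar{G}_j$ to obtain
\begin{equation*}
\sum_{s\in S^*\setminus\bar{G}_j}\bigl(f(\{s\}\cup\bar{G}_j)-f(\bar{G}_j)\bigr)\ge\gamma\bigl(f(S^*\cup\bar{G}_j)-f(\bar{G}_j)\bigr)\ge\gamma\bigl(f(S^*)-f(\bar{G}_j)\bigr),
\end{equation*}
where the second inequality uses monotonicity of $f$. Since $|S^*|\le K$, the left-hand side is at most $K\cdot\max_{v\in V\setminus\bar{G}_j}(f(\{v\}\cup\bar{G}_j)-f(\bar{G}_j))$, so combining with the hypothesis of the lemma gives
\begin{equation*}
f(\bar{G}_{j+1})-f(\bar{G}_j)\ge\frac{\gamma}{K}\bigl(f(S^*)-f(\bar{G}_j)\bigr)-\tau_j.
\end{equation*}

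Next, I would rewrite this as a recursion on the gap $\Delta_j\triangleq f(S^*)-f(\bar{G}_j)$:
\begin{equation*}
\Delta_{j+1}\le\Bigl(1-\frac{\gamma}{K}\Bigr)\Delta_j+\tau_j,
\end{equation*}
and unroll it from $j=1$ to $j=K$. Using $f(\bar{G}_1)=f(\emptyset)=0$ so that $\Delta_1=f(S^*)$, and the elementary bound $(1-\gamma/K)^K\le e^{-\gamma}$, together with $(1-\gamma/K)^{K-j}\le 1$ to simplify the weighted sum of the $\tau_j$'s, I obtain
\begin{equation*}
\Delta_{K+1}\le e^{-\gamma}f(S^*)+\sum_{j=1}^K\tau_j,
\end{equation*}
which rearranges to the claimed bound.

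None of the steps looks to be a serious obstacle; the only mild subtlety is justifying the reduction from $\max_{v\in V\setminus\bar{G}_j}$ to a sum indexed by $S^*\setminus\bar{G}_j$ (which might have fewer than $K$ terms, but this can only help the inequality), and verifying that discarding the geometric weights in front of the $\tau_j$'s is legitimate — both of which are in fact strengthenings of the bound. Thus the argument is essentially a direct generalization of the standard $(1-1/e)$ analysis, with $1/K$ replaced by $\gamma/K$ in the contraction factor.
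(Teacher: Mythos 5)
Your proposal is correct and follows essentially the same route as the paper's proof: apply the submodularity ratio with $A=S^*$, $B=\bar{G}_j$, bound the resulting sum by $K$ times the best marginal gain, combine with the near-greedy hypothesis to get the recursion $\Delta_{j+1}\le(1-\gamma/K)\Delta_j+\tau_j$, and unroll using $(1-\gamma/K)^K\le e^{-\gamma}$. The minor refinements you mention (handling $|S^*\setminus\bar{G}_j|<K$ and dropping the geometric weights on the $\tau_j$'s) are exactly the simplifications the paper makes as well.
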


Using similar arguments to those for Theorem $\ref{thm:convergence of decentralized greedy}$, one can obtain the following result from Definition~$\ref{def:submodularity ratio}$ and Lemma~$\ref{lemma:greedy algorithm with choice error nonsubmodular}$; the proof is omitted for conciseness.
\begin{corollary}
\label{coro:convergence of decentralized greedy nonsubmodular}
Consider Algorithm~$\ref{algorithm:decentralized greedy}$ for Problem \eqref{eqn:global objective function} with a set $\mathcal{N}$ of agents. Suppose Assumptions~$\ref{assumption:central clock}$ and $\ref{assumption:assumption of W}$ hold and $\psi\in\mathbb{R}_{\ge0}$ satisfies condition \eqref{eqn:cond on epsilon}. Denote the submodularity ratio of the local objective function $f_i(\cdot)$ as $\gamma_i\in\mathbb{R}$ for all $i\in\mathcal{N}$ and denote $\gamma_c\triangleq\mathop{\min}_{i\in\mathcal{N}}\gamma_i$. Suppose $\gamma_i>0$ for all $i\in\mathcal{N}$. Then Algorithm~$\ref{algorithm:decentralized greedy}$ lets all the agents in $\mathcal{N}$ reach consensus at a solution $\bar{S}^K$ to Problem  \eqref{eqn:global objective function}  that satisfies 
\begin{equation*}
\label{eqn:convergence nonsubmodular}
f(\bar{S}^K)\ge (1-e^{-\gamma_c}) f(S^*)-K(\psi+2\epsilon(T)),
\end{equation*}
where $\epsilon(T)=\sqrt{n}(\mu(W))^TF_h$ with $F_h=\mathop{\max}_{i\in\mathcal{N}}f_i(V)$, and $S^*$ is an optimal solution to Problem \eqref{eqn:global objective function}.
\end{corollary}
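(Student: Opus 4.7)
The plan is to run exactly the same chain of reasoning that produced Theorem~$\ref{thm:convergence of decentralized greedy}$, only substituting Lemma~$\ref{lemma:greedy algorithm with choice error nonsubmodular}$ for Lemma~$\ref{lemma:greedy algorithm with choice error}$ at the final step. The first thing I would point out is that nothing in the derivation of Lemma~$\ref{lemma:error bound for decentralized greedy}$ (or in the preceding Lemmas~$\ref{lemma:upper bound on the estimate error}$ and $\ref{lemma:elements close to the maximum}$) actually used submodularity of the $f_i$: the bound $x_{j,v}^0\le f_j(V)$ only needs monotonicity and $f_j(\emptyset)=0$, and the greedy-step error bound \eqref{eqn:error bound of global objective} is purely a statement about how close $\bar{S}^{k+1}\setminus\bar{S}^k$ is to the true local maximizer of the marginal gain. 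Hence, under the hypotheses of the corollary, the per-round guarantee
\begin{equation*}
f(\bar{S}^{k+1})-f(\bar{S}^k)\ge \mathop{\max}_{v\in V_k}\bigl(f(\{v\}\cup\bar{S}^k)-f(\bar{S}^k)\bigr)-\psi-2\epsilon(T)
\end{equation*}
continues to hold for every $k\in\{0,\dots,K-1\}$, with $\epsilon(T)=\sqrt{n}(\mu(W))^TF_h$.

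The second step is to verify that the global objective $f=\tfrac{1}{n}\sum_{i=1}^n f_i$ inherits a submodularity ratio of at least $\gamma_c=\min_{i\in\mathcal{N}}\gamma_i$. Using Definition~$\ref{def:submodularity ratio}$ for each $f_i$, for any $A,B\subseteq V$,
\begin{align*}
\sum_{a\in A\setminus B}\bigl(f(\{a\}\cup B)-f(B)\bigr)
&=\frac{1}{n}\sum_{i=1}^n\sum_{a\in A\setminus B}\bigl(f_i(\{a\}\cup B)-f_i(B)\bigr)\\
&\ge \frac{1}{n}\sum_{i=1}^n\gamma_i\bigl(f_i(A\cup B)-f_i(B)\bigr)\\
&\ge \gamma_c\bigl(f(A\cup B)-f(B)\bigr),
\end{align*}
where the last inequality uses $\gamma_i\ge\gamma_c$ together with monotonicity (which makes every term $f_i(A\cup B)-f_i(B)$ nonnegative). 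Thus $f$ has submodularity ratio at least $\gamma_c$, which is strictly positive by hypothesis.

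With these two ingredients in place, I would apply Lemma~$\ref{lemma:greedy algorithm with choice error nonsubmodular}$ to the sequence $\bar{G}_{k+1}=\bar{S}^k$ with the uniform slack $\tau_k=\psi+2\epsilon(T)$ for all $k\in\{1,\dots,K\}$. The conclusion of that lemma, instantiated with submodularity ratio $\gamma_c$, yields exactly
\begin{equation*}
f(\bar{S}^K)\ge (1-e^{-\gamma_c})f(S^*)-K\bigl(\psi+2\epsilon(T)\bigr),
\end{equation*}
which is the claim, together with the consensus property $\bar{S}_i^K=\bar{S}^K$ for all $i\in\mathcal{N}$ already established in the description of Algorithm~$\ref{algorithm:decentralized greedy}$.

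The only nontrivial step is the submodularity-ratio inheritance calculation for the average $f$; everything else is a mechanical reuse of Theorem~$\ref{thm:convergence of decentralized greedy}$'s proof with the sharper centralized guarantee. I would expect the main subtlety to be ensuring that the individual $\gamma_i$ bounds can be aggregated without loss beyond $\gamma_c$, which is why the averaging argument above uses the worst-case ratio rather than something like a weighted harmonic mean.
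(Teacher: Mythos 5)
Your proposal is correct and follows exactly the route the paper intends (the paper omits this proof, noting only that it follows from Definition~\ref{def:submodularity ratio}, Lemma~\ref{lemma:greedy algorithm with choice error nonsubmodular}, and the arguments for Theorem~\ref{thm:convergence of decentralized greedy}): you correctly observe that Lemma~\ref{lemma:error bound for decentralized greedy} uses only monotonicity, you supply the needed aggregation step showing $f=\tfrac{1}{n}\sum_i f_i$ has submodularity ratio at least $\gamma_c$, and you apply Lemma~\ref{lemma:greedy algorithm with choice error nonsubmodular} with $\tau_k=\psi+2\epsilon(T)$. No gaps.
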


Similarly to Section  \ref{sec:submodular case}, Corollary $\ref{coro:convergence of decentralized greedy nonsubmodular}$ leads to a tradeoff between the performance of the distributed greedy algorithm and the number of communication steps in each round of the algorithm, under the nonsubmodular setting.

\section{Conclusions}\label{sec:conclusion}
In this paper, we proposed a distributed greedy algorithm for maximizing a global submodular function, subject to a cardinality constraint, with a group of agents communicating over a network. The distributed greedy algorithm allows each agent to converge to within an additive factor of the $(1-1/e)$ approximation of the optimal solution to the global maximization problem. The additive factor reveals a tradeoff between the performance of the algorithm and the number of communication steps in each round of the algorithm. Finally, we extended our analysis to cases when the objective function is not submodular by leveraging the notion of submodularity ratio. %Future work on considering more general constraints and improving the convergence guarantees are of interest.

\section*{Appendix}

\subsection*{Proof of Lemma $\ref{lemma:greedy algorithm with choice error nonsubmodular}$:}
The proof is based on the idea of the proof for Theorem $6$ in \cite{streeter2009online}.  Denote $\Delta_j\triangleq f(S^*)-f(\bar{G}_j)$ for all $j\in\{1,\dots,K+1\}$ and $\bar{\beta}_j\triangleq f(\{\bar{g}_j\cup\bar{G}_j\})-f(\bar{G}_j)=f(\bar{G}_{j+1})-f(\bar{G}_j)$ for all $j\in\{1,\dots,K\}$. We then have from Definition $\ref{def:submodularity ratio}$
\begin{equation}
\label{eqn:nonsub derivation 1}
f(\bar{G}_j\cup S^*)-f(\bar{G}_j)\le\frac{1}{\gamma}(\sum_{v\in S^*\setminus\bar{G}_j}f(\{v\}\cup\bar{G}_j)-f(\bar{G}_j)).
\end{equation}
Noting that $\bar{\beta}_j\ge\{\mathop{\max}_{v\in V\setminus\bar{G}_j}(f(\{v\}\cup \bar{G}_j)-f(\bar{G}_j))\}-\tau_j$ for all $j\in\{1,\dots,K\}$, we have $f(\{v\}\cup \bar{G}_j)-f(\bar{G}_j))\le\bar{\beta}_j+\tau_j$ for all $v\in S^*\setminus\bar{G}_j$ and for all $j\in\{1,\dots,K\}$. It then follows from $|S^*|=K$ and \eqref{eqn:nonsub derivation 1} that
\begin{align}\nonumber
%&f(\bar{G}_j\cup S^*)-f(\bar{G}_j)\le\frac{K}{\gamma}(\bar{\beta}_j+\tau_j)\\\nonumber
& f(S^*)\le f(\bar{G}_j\cup S^*)\le f(\bar{G}_j)+\frac{K}{\gamma}(\bar{\beta}_j+\tau_j)\\\nonumber
\Rightarrow&\Delta_j\le\frac{K}{\gamma}(\bar{\beta}_j+\tau_j)\Rightarrow\Delta_j\le\frac{K}{\gamma}(\Delta_j-\Delta_{j+1}+\tau_j)\\
\Rightarrow&\Delta_{j+1}\le(1-\frac{\gamma}{K})\Delta_j+\tau_j.\label{eqn:nonsub derivation 2}
\end{align}
Unrolling \eqref{eqn:nonsub derivation 2}, we obtain $\Delta_{K+1}\le(1-\frac{\gamma}{K})^K\Delta_1+\sum_{j=1}^K\tau_j$, where we use the fact $1-\frac{\gamma}{K}<1$. Therefore, $f(S^*)-f(\bar{G}_{K+1})\le(1-\frac{\gamma}{K})^Kf(S^*)+\sum_{j=1}^K\tau_j$, which implies $f(\bar{G}_{K+1})\ge f(S^*)-e^{-\gamma}f(S^*)-\sum_{j=1}^K\tau_j$.\hfill\QED

\iffalse
\begin{algorithm}
\textbf{Input:} $V$, $f_i:2^V\to\mathbb{R}_{\ge0},\forall i\in\mathcal{N}$, $K$\\ 
\textbf{Output:} $S$
\caption{Decentralized Greedy Algorithm}
\begin{algorithmic}[1]
\State $k\gets1$, $S\gets\emptyset$
\While{$k\le K$}
    \State Let all the agents reach an element $s^{\prime}$ such that
    \Statex\qquad $s^{\prime}\ge\mathop{\arg\max}_{s\in V\setminus S}(f(\{s\}\cup S)-f(S))-\epsilon_k$ 
    \State $S\gets S\cup\{s^{\prime}\}$, $k\gets k+1$
\EndWhile
\end{algorithmic}
\end{algorithm}
\fi

%\addtolength{\textheight}{-12cm}   % This command serves to balance the column lengths
                                  % on the last page of the document manually. It shortens
                                  % the textheight of the last page by a suitable amount.
                                  % This command does not take effect until the next page
                                  % so it should come on the page before the last. Make
                                  % sure that you do not shorten the textheight too much.

%%%%%%%%%%%%%%%%%%%%%%%%%%%%%%%%%%%%%%%%%%%%%%%%%%%%%%%%%%%%%%%%%%%%%%%%%%%%%%%%

%%%%%%%%%%%%%%%%%%%%%%%%%%%%%%%%%%%%%%%%%%%%%%%%%%%%%%%%%%%%%%%%%%%%%%%%%%%%%%%%

%%%%%%%%%%%%%%%%%%%%%%%%%%%%%%%%%%%%%%%%%%%%%%%%%%%%%%%%%%%%%%%%%%%%%%%%%%%%%%%%

\bibliographystyle{IEEEtran}
\bibliography{main}

\end{document}